\documentclass[12pt,twoside]{article}
\usepackage[centertags]{amsmath}
\usepackage{amsfonts}
\usepackage{amssymb}
\usepackage{amsthm}
\usepackage{newlfont}

\setcounter{page}{1}
\setlength{\textheight}{21.6cm}
\setlength{\textwidth}{14cm}
\setlength{\oddsidemargin}{1cm}
\setlength{\evensidemargin}{1cm}
\pagestyle{myheadings}
\markboth{\small{M. Louzari}}
{\small{On skew polynomials over p.q.-Baer and p.p.-modules}}
\date{}
\thispagestyle{empty}

\def\NL{\hfill\break}
\def\ni{\noindent}
\newcommand{\set}[1]{\left\{#1\right\}}
\newcommand{\parth}[1]{\left(#1\right)}

\begin{document}
%
%

\centerline {\Large{\bf On Skew Polynomials over p.q.-Baer}}

\centerline{}

\centerline {\Large{\bf and p.p.-Modules}}

\centerline{}

\centerline{}

\centerline{\bf {Mohamed Louzari}}

\centerline{}

\centerline{Department of mathematics}

\centerline{Abdelmalek Essaadi University}

\centerline{B.P. 2121 Tetouan, Morocco}

\centerline{mlouzari@yahoo.com}
\newtheorem{Theorem}{\quad Theorem}[section]
\newtheorem{Definition}[Theorem]{\quad Definition}
\newtheorem{Proposition}[Theorem]{\quad Proposition}
\newtheorem{Corollary}[Theorem]{\quad Corollary}
\newtheorem{Lemma}[Theorem]{\quad Lemma}
\newtheorem{Example}[Theorem]{\quad Example}
\newtheorem{Remark}[Theorem]{\quad Remark}
\renewcommand{\thefootnote}{\fnsymbol{footnote}}
\renewcommand{\thefootnote}{\fnsymbol{footnote}}

\centerline{}

\centerline{}

\centerline{\small This work is dedicated to my Professor El Amin Kaidi Lhachmi from University}

\centerline{\small of Almer\'ia on the occasion of his 62nd birthday.}

\centerline{}

\begin{abstract}Let $M_R$ be a module and $\sigma$ an endomorphism of $R$. Let $m\in M$ and $a\in R$, we say that $M_R$ satisfies the condition $\mathcal{C}_1$ (respectively, $\mathcal{C}_2$), if $ma=0$ implies
$m\sigma(a)=0$ (respectively, $m\sigma(a)=0$ implies $ma=0$). We show that if $M_R$ is p.q.-Baer then so is $M[x;\sigma]_{R[x;\sigma]}$ whenever $M_R$ satisfies the condition $\mathcal{C}_2$, and the converse holds when $M_R$ satisfies the condition $\mathcal{C}_1$. Also, if $M_R$ satisfies $\mathcal{C}_2$ and $\sigma$-skew Armendariz, then $M_R$ is a p.p.-module if and only if $M[x;\sigma]_{R[x;\sigma]}$ is a p.p.-module if and only if $M[x,x^{-1};\sigma]_{R[x,x^{-1};\sigma]}$ ($\sigma\in Aut(R)$) is a p.p.-module. Many generalizations are obtained and more results are found when $M_R$ is a semicommutative module.

\end{abstract}

{\bf Mathematics Subject Classification:} 16S36, 16D80, 16W80\\

{\bf Keywords:} Semicommutative modules, p.q.-Baer modules, p.p.-modules.

\footnote[0]{Published in Inter. Math. Forum, Vol. 6, 2011, no. 35, 1739 - 1747}

\section{Introduction}

In this paper, $R$ denotes an associative ring with unity and modules are unitary. We write $M_R$ to mean that $M$ is
a right module. Throughout, $\sigma$ is an endomorphism of $R$ (unless specified otherwise), that is,
$\sigma\colon R\rightarrow R$ is a ring homomorphism with $\sigma(1)=1$. The set of all endomorphisms (respectively, automorphisms)
of $R$ is denoted by $End(R)$ (respectively, Aut(R)). In \cite{Kaplansky}, Kaplansky introduced Baer rings as rings in which the right
(left) annihilator of every nonempty subset is generated by an idempotent. According to Clark \cite{clark}, a ring $R$
is said to be {\it quasi-Baer} if the right annihilator of each right ideal of  $R$ is generated (as a right ideal) by
an idempotent. These definitions are left-right symmetric. Recently, Birkenmeier et al. \cite{birk/pqBaer} called a ring
$R$ a {\it right} $($respectively, {\it left$)$ principally quasi-Baer} (or simply {\it right} $($respectively, {\it left$)$ p.q.-Baer}) if the
right (respectively, left) annihilator of a principally right (respectively, left) ideal of $R$ is generated by an idempotent. $R$ is
called a {\it p.q.-Baer} ring if it is both right and left p.q.-Baer. A ring $R$ is a right (respectively, left) {\it p.p.-ring} if
the right (respectively, left) annihilator of an element of $R$ is generated by an idempotent. $R$ is called a {\it p.p.-ring}
if it is both right and left p.p.-ring.

\smallskip

Lee-Zhou \cite{lee/zhou} introduced Baer, quasi-Baer and p.p.-modules as follows:
\NL$(1)$ $M_R$ is called {\it Baer} if, for any subset $X$ of $M$, $r_R(X)=eR$ where $e^2=e\in R$.
\NL$(2)$ $M_R$ is called {\it quasi-Baer} if, for any submodule $N$ of $M$, $r_R(N)=eR$ where $e^2=e\in R$.
\NL$(3)$ $M_R$ is called {\it p.p.} if, for any $m\in M$, $r_R(m)=eR$ where $e^2=e\in R$.

\smallskip

In \cite{baser2007}, a module $M_R$ is called {\it principally quasi Baer} (p.q.-Baer for short) if, for any $m\in M$,
$r_R(mR)=eR$ where $e^2=e\in R$. It is clear that $R$ is a right p.q.-Baer ring if and only if $R_R$ is a p.q.-Baer module. If $R$
is a p.q.-Baer ring, then for any right ideal $I$ of $R$, $I_R$ is a p.q.-Baer module. Every submodule of a p.q.-Baer
module is p.q.-Baer module. Moreover, every quasi-Baer module is p.q.-Baer, and every Baer module is quasi-Baer module.

\par A ring $R$ is called {\it semicommutative} if for every $a\in R$, $r_R(a)$ is an ideal of $R$ (equivalently, for
any $a,b\in R$, $ab=0$ implies $aRb=0$). In \cite{rege2002}, a module $M_R$ is semicommutative, if for any $m\in M$ and $a\in R$, $ma=0$ implies $mRa=0$. Let $\sigma$ an endomorphism of $R$, $M_R$ is called $\sigma$-semicommutative module \cite{zhang/chen} if, for any $m\in M$ and $a\in R$, $ma=0$ implies $mR\sigma(a)=0$. According to Annin \cite{annin}, a module $M_R$ is $\sigma$-{\it compatible}, if for any $m\in M$ and $a\in R$, $ma=0$ if and only if $m\sigma(a)=0$.

In \cite{lee/zhou}, Lee-Zhou introduced the following notations.  For a module $M_R$, we consider
\par $M[x;\sigma]:=\set{\sum_{i=0}^sm_ix^i:s\geq 0,m_i\in M},$
\par $M[[x;\sigma]]:=\set{\sum_{i=0}^\infty m_ix^i:m_i\in M},$
\par $M[x,x^{-1};\sigma]:=\set{\sum_{i=-s}^tm_ix^i:\;t\geq 0,s\geq 0,m_i\in M},$
\par $M[[x,x^{-1};\sigma]]:=\set{\sum_{i=-s}^\infty m_ix^i:s\geq 0,m_i\in M}.$

\NL Each of these is an Abelian group under an obvious addition operation. Moreover  $M[x;\sigma]$ becomes a module over
$R[x;\sigma]$ under the following scalar product operation:
\par For $m(x)=\sum_{i=0}^n m_ix^i\in M[x;\sigma]$ and $f(x)=\sum_{j=0}^m a_jx^j\in R[x;\sigma]$
$$m(x)f(x)=\sum_{k=0}^{n+m}\parth{\sum_{k=i+j}m_i\sigma^i(a_j)}x^k\eqno(*)$$
Similarly, $M[[x;\sigma]]$ is a module over $R[[x;\sigma]]$. The modules $M[x;\sigma]$ and $M[[x;\sigma]]$ are called
the {\it skew polynomial extension} and the {\it skew power series extension of $M$}, respectively. If $\sigma\in Aut(R)$, then with a scalar product similar to $(*)$ , $M[x,x^{-1};\sigma]$ (respectively, $M[[x,x^{-1};\sigma]]$) becomes a module over $R[x,x^{-1};\sigma]$ (respectively, $R[[x,x^{-1};\sigma]]$). The modules $M[x,x^{-1};\sigma]$ and $M[[x,x^{-1};\sigma]]$ are called the {\it  skew Laurent polynomial extension} and the {\it  skew Laurent power series extension} of $M$, respectively. In \cite{zhang/chen}, a module $M_R$ is called $\sigma$-{\it skew Armendariz}, if $m(x)f(x)=0$ where $m(x)=\sum_{i=0}^nm_ix^i\in M[x;\sigma]$ and $f(x)=\sum_{j=0}^ma_jx^j\in R[x;\sigma]$ implies $m_i\sigma^i(a_j)=0$ for all $i$ and $j$. According to Lee-Zhou \cite{lee/zhou}, $M_R$ is called $\sigma$-{\it Armendariz}, if it is $\sigma$-compatible and $\sigma$-skew Armendariz.

\bigskip

In this paper, we show that if $M_R$ is p.q.-Baer then so is $M[x;\sigma]_{R[x;\sigma]}$ whenever $M_R$ satisfies the condition $\mathcal{C}_2$, and the converse holds when $M_R$ satisfies the condition $\mathcal{C}_1$ (Proposition \ref{prop pqbaer}). Also, if $M_R$ satisfies $\mathcal{C}_2$ and $\sigma$-skew Armendariz, then $M_R$ is a p.p.-module if and only if $M[x;\sigma]_{R[x;\sigma]}$ is a p.p.-module if and only if $M[x,x^{-1};\sigma]_{R[x,x^{-1};\sigma]}$ ($\sigma\in Aut(R)$) is a p.p.-module (Proposition \ref{prop pp}). As a consequence, if $M_R$ is semicommutative and $\sigma$-compatible then:
\NL $M_R$ is a p.p.-module $\Leftrightarrow$ $M_R$ is a p.q.-Baer module $\Leftrightarrow$ $M[x;\sigma]_{R[x;\sigma]}$ is a p.p.-module $\Leftrightarrow$ $M[x;\sigma]_{R[x;\sigma]}$ is a p.q.-Baer module (Theorem \ref{theo2}). Moreover, we obtain a generalization of some results in \cite{baser2007, Baser2, birk/OnpolyExt,lee/zhou}.

\section{Skew polynomials over p.q.-Baer modules}

We start with the next definition.

\begin{Definition}\label{df1}Let $m\in M$ and $a\in R$. We say that $M_R$ satisfies the condition $\mathcal{C}_1$ $($respectively, $\mathcal{C}_2$$)$, if $ma=0$ implies
$m\sigma(a)=0$ $($respectively, $m\sigma(a)=0$ implies $ma=0$$)$.
\end{Definition}

Note that $M_R$ is $\sigma$-compatible if and only if it satisfies $\mathcal{C}_1$ and $\mathcal{C}_2$. Let $M_R$ be a module and $\sigma\in End(R)$.

\begin{Lemma}\label{lemma idempo}If $M_R$ satisfies $\mathcal{C}_1$ or $\mathcal{C}_2$, then $me=m\sigma(e)$
for any $m\in M$ and any $e^2=e\in R$.
\end{Lemma}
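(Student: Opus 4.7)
The plan is to exploit the two complementary identities $e(1-e)=0$ and $(1-e)e=0$ that hold for any idempotent, and then apply the hypothesis ($\mathcal{C}_1$ or $\mathcal{C}_2$) to transfer these annihilation relations between $e$ and $\sigma(e)$. In each of the two cases, two short applications of the hypothesis will show that $me$ and $m\sigma(e)$ both coincide with a common intermediate expression (namely $me\sigma(e)$ in the first case, $m\sigma(e)e$ in the second).

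First I would handle $\mathcal{C}_1$. Since $e^2=e$, we have $me(1-e)=me-me^2=0$. Applying $\mathcal{C}_1$ to the element $me\in M$ and the ring element $1-e$ yields $me\cdot\sigma(1-e)=0$, i.e., $me=me\sigma(e)$. On the other hand, $m(1-e)e=me-me^2=0$, and a second application of $\mathcal{C}_1$ gives $m(1-e)\sigma(e)=0$, i.e., $m\sigma(e)=me\sigma(e)$. Comparing the two equalities forces $me=m\sigma(e)$.

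For $\mathcal{C}_2$ the argument is dual, using the fact that $\sigma$ is a ring homomorphism (so $\sigma(e)$ is itself idempotent and $\sigma$ preserves the identities $e(1-e)=0$ and $(1-e)e=0$). Starting from $\sigma(e)\sigma(1-e)=\sigma(e(1-e))=0$ I get $m\sigma(e)\cdot\sigma(1-e)=0$, and $\mathcal{C}_2$ gives $m\sigma(e)(1-e)=0$, i.e., $m\sigma(e)=m\sigma(e)e$. Similarly $\sigma(1-e)\sigma(e)=0$ produces $m\sigma(1-e)\cdot\sigma(e)=0$, and $\mathcal{C}_2$ yields $m\sigma(1-e)e=0$, i.e., $me=m\sigma(e)e$. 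Combining gives $me=m\sigma(e)$. I do not expect any real obstacle here; the only point to watch is that each case genuinely requires two applications of the hypothesis (one to move $\sigma$ onto $e$, the other to take it off), and that the roles of $e$ and $1-e$ must be swapped between the two applications to produce a common middle term.
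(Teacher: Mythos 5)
Your proof is correct and follows essentially the same route as the paper: in each case two applications of the hypothesis to the annihilation relations coming from $e(1-e)=0$ and $(1-e)e=0$ identify $me$ and $m\sigma(e)$ with a common middle term ($me\sigma(e)$ for $\mathcal{C}_1$, $m\sigma(e)e$ for $\mathcal{C}_2$). Your $\mathcal{C}_2$ argument matches the paper's verbatim in substance, and you additionally write out the $\mathcal{C}_1$ case, which the paper dismisses with ``the same for $\mathcal{C}_1$.''
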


\begin{proof} Suppose $\mathcal{C}_2$, from $m\sigma(e)(1-\sigma(e))=0$, we have $0=m\sigma(e)(1-e)=m\sigma(e)-m\sigma(e)e$, so
$m\sigma(e)e=m\sigma(e)$. From $m(1-\sigma(e))\sigma(e)=0$, we have $0=m(1-\sigma(e))e=me-m\sigma(e)e$, so
$m\sigma(e)=m\sigma(e)e=me$. The same for $\mathcal{C}_1$.
\end{proof}

\begin{Proposition}\label{prop pqbaer} Let $M_R$ be a module and $\sigma\in End(R)$.
\NL$(1)$ If $M_R$ is a p.q.-Baer module then so is $M[x;\sigma]_{R[x;\sigma]}$, whenever $M_R$ satisfies the condition
$\mathcal{C}_2$.
\NL$(2)$ If $M[x;\sigma]_{R[x;\sigma]}$ or $M[[x;\sigma]]_{R[[x;\sigma]]}$ is a p.q.-Baer module then so is
$M_R$, whenever $M_R$ satisfies the condition $\mathcal{C}_1$.
\end{Proposition}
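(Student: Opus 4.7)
The strategy is to transfer the p.q.-Baer property between $M_R$ and its skew polynomial extension by matching idempotent generators of the relevant annihilators; part $(1)$ builds one such generator out of many, while part $(2)$ extracts one from the constant term of a polynomial/series idempotent.

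For $(1)$, fix $m(x)=\sum_{i=0}^n m_i x^i$ and, using the p.q.-Baer hypothesis on $M_R$, pick idempotents $e_i\in R$ with $r_R(m_iR)=e_iR$. I would first produce a single idempotent $e\in R$ satisfying $eR=\bigcap_{i=0}^n e_iR=r_R(\sum_i m_iR)$ by induction on $n$: assuming an idempotent $f$ with $fR=r_R(m_0R+\cdots+m_{k-1}R)$ is already built, apply the p.q.-Baer property to the element $m_kf\in M$ and use Lemma \ref{lemma idempo}, which under $\mathcal{C}_2$ lets one replace $\sigma^j(f)$ by $f$ when acting on $M$, to refine $f$ into the next idempotent in the sequence. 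Once $e$ is available, I would verify $r_{R[x;\sigma]}(m(x)R[x;\sigma])=eR[x;\sigma]$ in two steps: the inclusion $\supseteq$ is routine, since $m_iRe=0$ for every $i$ combined with Lemma \ref{lemma idempo} absorbing each $\sigma^l(e)$ back into $e$ forces $m(x)g(x)e=0$ for all $g(x)$; for $\subseteq$, take $f(x)=\sum a_j x^j$ in the left-hand side, specialize $g(x)=r$ in $m(x)g(x)f(x)=0$ to obtain $\sum_{i+j=l}m_i\sigma^i(ra_j)=0$, use $\mathcal{C}_2$ to strip the $\sigma^i$'s, and induct on $l$, exploiting the already-known memberships $a_0,\dots,a_{l-1}\in eR$, to conclude $a_l\in eR$.

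For $(2)$, fix $m\in M$ viewed as a constant in $M[x;\sigma]$ (respectively, $M[[x;\sigma]]$). The hypothesis furnishes an idempotent $e(x)=\sum_{i\ge 0}e_i x^i$ with $r_{R[x;\sigma]}(mR[x;\sigma])=e(x)R[x;\sigma]$ (or the $R[[x;\sigma]]$ analogue). Comparing $x^0$-coefficients in $e(x)^2=e(x)$ yields $e_0^2=e_0$. I then claim $r_R(mR)=e_0R$. For $\supseteq$, reading the $x^0$-coefficient of $m r e(x)=0$ (valid for every $r\in R$) gives $mRe_0=0$. For $\subseteq$, any $a\in r_R(mR)$ satisfies $ma_j a=0$ for every $j$, so $\mathcal{C}_1$ iterated gives $ma_j\sigma^j(a)=0$, whence $a\in r_{R[x;\sigma]}(mR[x;\sigma])=e(x)R[x;\sigma]$; writing $a=e(x)g(x)$ and reading off constant terms yields $a=e_0g_0\in e_0R$. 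The formal power-series version is handled identically.

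The main obstacle is the construction in part $(1)$: the p.q.-Baer hypothesis on $M_R$ concerns only \emph{principal} submodules, so assembling a single idempotent for the finite sum $\sum_i m_iR$ requires the inductive step through $m_k f_{k-1}$ together with Lemma \ref{lemma idempo} to reconcile the skew action with the static idempotent $e$. After that hurdle, the verification that $eR[x;\sigma]$ is the full annihilator of $m(x)R[x;\sigma]$ is a careful but standard degree-by-degree bookkeeping on the multiplication formula $(*)$.
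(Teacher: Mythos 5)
Your part (2) is correct and is essentially the paper's own argument (extract the constant term $e_0$ of the polynomial idempotent, note $e_0^2=e_0$, and identify $r_R(mR)$ with $r_{R[x;\sigma]}(mR[x;\sigma])\cap R=e_0R$ using $\mathcal{C}_1$). In part (1) the outer skeleton also matches, and your inductive construction of $e$ via $m_kf$ can be made to work, though it is an unnecessary detour: each $r_R(m_iR)$ is a two-sided ideal, so its generator $e_i$ is left semicentral and the paper simply takes $e=e_0e_1\cdots e_n$, which is idempotent with $eR=\bigcap_i e_iR$.

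The genuine gap is in the inclusion $r_{R[x;\sigma]}(m(x)R[x;\sigma])\subseteq eR[x;\sigma]$. The induction you describe --- at step $l$ the memberships $a_0,\dots,a_{l-1}\in eR$ are already known and the degree-$l$ identity yields $a_l\in eR$ --- cannot be run. The base case already fails: the degree-$0$ identity $m_0ra_0=0$ gives only $a_0\in e_0R$, not $a_0\in eR$; the information $m_kRa_0=0$ (hence $a_0\in e_kR$) is only extractable from the degree-$k$ identity. Symmetrically, even granting $a_0,\dots,a_{l-1}\in eR$, the degree-$l$ identity would only leave $m_0ra_l=0$, i.e.\ $a_l\in e_0R$, never $a_l\in eR$. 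The bookkeeping is necessarily triangular: after processing degree $\ell$ one knows $a_j\in e_0e_1\cdots e_{\ell-j}R$ for $j\le\ell$, and $a_j\in eR$ is reached only at degree $j+n$. A second, related problem is that ``use $\mathcal{C}_2$ to strip the $\sigma^i$'s'' is not a legitimate move on the sum $\sum_{i+j=l}m_i\sigma^i(ra_j)=0$: $\mathcal{C}_2$ converts a single vanishing product $m\sigma(a)=0$ into $ma=0$, so the terms must first be isolated. That is exactly what the paper's substitutions $r=se_0$, $r=se_0e_1,\dots$ accomplish, and killing an already-controlled term $m_i\sigma^i(ra_j)$ with $a_j\in e_{j'}R$ and $e_{j'}\in r_R(m_iR)$ requires Lemma \ref{lemma idempo} to replace $\sigma^i(e_{j'})$ by $e_{j'}$ against $m_i\sigma^i(r)$ --- one cannot instead pass from $m_ira_j=0$ to $m_i\sigma^i(ra_j)=0$, since that is $\mathcal{C}_1$, which is not assumed in part (1).
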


\begin{proof}$(1)$ Let $m(x)=m_0+m_1x+\cdots+m_nx^n\in M[x;\sigma]$. Then $r_R(m_iR)=e_iR$, for
some idempotents $e_i\in R\;(0\leq i\leq n)$. Let $e=e_0e_1\cdots e_n$, then $eR=\cap_{i=0}^nr_R(m_iR)$.
We show that $r_{R[x;\sigma]}(m(x)R[x;\sigma])=eR[x;\sigma]$. Let $\phi(x)=a_0+a_1x+a_2x^2+\cdots+a_px^p\in
r_{R[x;\sigma]}(m(x)R[x;\sigma])$. Since $m(x)R\phi(x)=0$, we have $m(x)b\phi(x)=0$ for all $b\in R$. Then
$$m(x)b\phi(x)=\sum_{\ell=0}^{n+p}\parth{\sum_{\ell=i+j}m_i\sigma^i(ba_j)}x^{\ell}=0.$$
\begin{itemize}
\item $\ell=0$ implies $m_0ba_0=0$ then $a_0\in r_R(m_0R)=e_0R$.
\item $\ell=1$ implies $$m_0ba_1+m_1\sigma(ba_0)=0\eqno(1)$$ Let $s\in R$ and take $b=se_0$, so
    $m_0se_0a_1+m_1\sigma(se_0a_0)=0$, since $m_0se_0=0$ we have $m_1\sigma(se_0a_0)=m_1\sigma(sa_0)=0$, so
    $m_1sa_0=0$, thus $a_0\in r_R(m_1R)=e_1R$. In equation $(1)$,
    $m_1\sigma(ba_0)=m_1\sigma(be_1a_0)=m_1\sigma(b)e_1\sigma(a_0)=0$, by Lemma \ref{lemma idempo}. Then equation (1)
    gives $m_0ba_1=0$, so $a_1\in e_0R$.
\item $\ell=2$ implies $$m_0ba_2+m_1\sigma(ba_1)+m_2\sigma^2(ba_0)=0\eqno(2)$$ Let $s\in R$ and take $b=se_0e_1$,
    so $m_0se_0e_1a_2+m_1\sigma(s)e_0e_1\sigma(a_1)+m_2\sigma^2(se_0e_1a_0)=0$, but
    $m_0se_0e_1a_2=m_1\sigma(s)e_0e_1\sigma(a_1)=0$ we have $m_2\sigma^2(se_0e_1a_0)=0$, since $e_0e_1a_0=a_0$ we
    have $m_2\sigma^2(sa_0)=0$ and so $m_2sa_0=0$ for all $s\in R$. Hence $a_0\in e_2R$ (thus, $a_0\in e_0e_1e_2R$).
    Equation $(2)$, becomes $m_0ba_2+m_1\sigma(ba_1)+m_2\sigma^2(b)e_0e_1e_2\sigma^2(a_0)=0$, which gives
    $$m_0ba_2+m_1\sigma(ba_1)=0\eqno(2')$$ Take $b=se_0$ in equation $(2')$, we have $m_0se_0a_2+m_1\sigma(se_0a_1)=0$,
    but $m_0se_0a_2=0$ so $m_1\sigma(se_0a_1)=m_1\sigma(sa_1)=0$ and thus $m_1sa_1=0$, hence $a_1\in e_1R$ (so,
    $a_1\in e_0e_1R$). Equation $(2')$ gives $m_0ba_2=0$, so $a_2\in e_0R$.
\end{itemize}
\par At this point, we have $a_0\in e_0e_1e_2R,\;a_1\in e_1e_2R$ and $a_2\in e_0R$. Continuing this procedure yields
$a_i\in eR$ ($0\leq i\leq n$). Hence $\phi(x)\in eR[x;\sigma]$. Consequently,
$r_{R[x;\sigma]}(m(x)R[x;\sigma])\subseteq eR[x;\sigma]$. Conversely, let $\varphi(x)=b_0+b_1x+b_2x^2+\cdots+b_px^p\in
R[x;\sigma]$. Then
$$m(x)\varphi(x)e=\sum_{\ell=0}^{n+p}\parth{\sum_{\ell=i+j}m_i\sigma^i(b_j)\sigma^{\ell}(e)}x^{\ell}=
\sum_{\ell=0}^{n+p}\parth{\sum_{\ell=i+j}
m_i\sigma^i(b_j)e}x^{\ell}.$$ Since $e\in\bigcap_{i=0}^nr_R(m_iR)$, then $m_iRe=0$ ($0\leq i\leq n$). Thus
$m(x)\varphi(x)e=0$, hence $eR[x;\sigma]\subseteq r_{R[x;\sigma]}(m(x)R[x;\sigma])$. Thus
$r_{R[x;\sigma]}(m(x)R[x;\sigma])=eR[x;\sigma]$, therefore $M[x;\sigma]_{R[x;\sigma]}$ is p.q.-Baer.

\NL$(2)$ Let $0\neq m\in M$. We have $r_{R[x;\sigma]}(mR[x;\sigma])=e{R[x;\sigma]}$ for some idempotent
$e=\sum_{i=0}^ne_ix^i\in R[x;\sigma]$. We have $r_{R[x;\sigma]}(mR[x;\sigma])\cap R=e_0R$. On other hand, we show that
$r_{R[x;\sigma]}(mR[x;\sigma])\cap R=r_R(mR)$. Let $a\in r_R(mR)$ then $mRa=0$, so $mR\sigma^i(a)=0$ for all $i\geq
1$.
So $mR[x;\sigma]a=0$. Therefore $a\in r_{R[x;\sigma]}(mR[x;\sigma])\cap R$. Conversely, let $a\in
r_{R[x;\sigma]}(mR[x;\sigma])\cap R$, then $mR[x;\sigma]a=0$, in particular $mRa=0$, so $a\in r_R(mR)$. Thus $a\in
r_R(mR)=e_0R$, with $e_0^2=e_0\in R$. So $M_R$ is p.q.-Baer. The same method for $M[[x;\sigma]]$.
\end{proof}

\begin{Corollary}[{\cite[Theorem 11]{baser2007}}]\label{cor pqbaer2} $M_R$ is p.q.-Baer
if and only if $M[x]_{R[x]}$ is p.q.-Baer.
\end{Corollary}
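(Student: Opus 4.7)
The plan is to derive this as a direct specialization of Proposition \ref{prop pqbaer} to the identity endomorphism $\sigma = \mathrm{id}_R$. First I observe that when $\sigma = \mathrm{id}_R$ we have $\sigma(a) = a$ for every $a \in R$, so the defining implications of the conditions $\mathcal{C}_1$ and $\mathcal{C}_2$ both degenerate into the tautology $ma = 0 \Leftrightarrow ma = 0$. Consequently, every module $M_R$ automatically satisfies both $\mathcal{C}_1$ and $\mathcal{C}_2$ with respect to the identity, and no side hypothesis is needed.

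Next I would verify that the twisted constructions reduce to the untwisted ones: the scalar multiplication formula $(*)$ with $\sigma = \mathrm{id}_R$ becomes the ordinary polynomial convolution, so $R[x;\mathrm{id}_R] = R[x]$ as rings and $M[x;\mathrm{id}_R] = M[x]$ as an $R[x]$-module. Thus the statement of the corollary is literally the statement of Proposition \ref{prop pqbaer} with $\sigma$ replaced by $\mathrm{id}_R$, split into its two implications.

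Applying part $(1)$ of Proposition \ref{prop pqbaer} yields that $M_R$ p.q.-Baer implies $M[x]_{R[x]}$ is p.q.-Baer, and applying part $(2)$ gives the converse direction. There is essentially no obstacle here: the only conceptual point worth flagging is the automatic verification of the $\mathcal{C}_i$ hypotheses in the untwisted setting, which is the observation made in the first paragraph. The polynomial and power-series cases fall out together, but since the corollary only asserts the polynomial version, the reduction is complete.
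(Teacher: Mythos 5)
Your proposal is correct and matches the paper's intent exactly: the corollary is stated without proof precisely because it is the specialization of Proposition \ref{prop pqbaer} to $\sigma=\mathrm{id}_R$, where $\mathcal{C}_1$ and $\mathcal{C}_2$ hold trivially and the skew constructions collapse to the ordinary polynomial module. Nothing further is needed.
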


\begin{Corollary}[{\cite[Theorem 3.1]{birk/OnpolyExt}}]\label{cor pqbaer1}
$R$ is right p.q.-Baer if and only if $R[x]$ is right p.q.-Baer.
\end{Corollary}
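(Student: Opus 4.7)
The plan is to obtain Corollary \ref{cor pqbaer1} as an immediate specialization of the module-theoretic statement in Corollary \ref{cor pqbaer2}, using the well-known translation between ring-theoretic and module-theoretic p.q.-Baerness. Recall the observation made in the introduction: a ring $R$ is right p.q.-Baer if and only if the right regular module $R_R$ is a p.q.-Baer module, and similarly $R[x]$ is right p.q.-Baer if and only if $R[x]_{R[x]}$ is a p.q.-Baer module.

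With that in hand, I would apply Corollary \ref{cor pqbaer2} to the choice $M = R_R$. Since $M_R = R_R$, the construction $M[x]_{R[x]}$ is literally $R[x]_{R[x]}$ (the skew notation collapses to the ordinary polynomial ring because $\sigma = \mathrm{id}_R$, in which case the conditions $\mathcal{C}_1$ and $\mathcal{C}_2$ hold trivially, which is why Corollary \ref{cor pqbaer2} is itself unconditional). Corollary \ref{cor pqbaer2} then yields $R_R$ p.q.-Baer $\Longleftrightarrow$ $R[x]_{R[x]}$ p.q.-Baer. Chaining with the two ring/module equivalences above gives the desired conclusion: $R$ is right p.q.-Baer $\Longleftrightarrow$ $R[x]$ is right p.q.-Baer.

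There is essentially no obstacle to overcome here; the only thing worth double-checking is that the scalar action defined by $(*)$ on $M[x;\sigma]$ specializes, in the case $M = R$ and $\sigma = \mathrm{id}$, to the usual multiplication in the polynomial ring $R[x]$ (so that $R[x]_{R[x]}$ in the module sense agrees with the right regular module over the polynomial ring in the ring sense). This is a direct verification from formula $(*)$, after which the corollary follows in a single line.
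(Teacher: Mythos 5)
Your proposal is correct and matches the paper's (implicit) derivation: the paper presents this as an immediate consequence of Proposition \ref{prop pqbaer} in the case $\sigma=\mathrm{id}_R$ (equivalently, of Corollary \ref{cor pqbaer2} with $M=R_R$), using the remark from the introduction that $R$ is right p.q.-Baer precisely when $R_R$ is a p.q.-Baer module. Your additional check that the scalar action $(*)$ reduces to ordinary polynomial multiplication when $M=R$ and $\sigma=\mathrm{id}$ is the right detail to verify, and it goes through.
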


\bigskip

\ni$M_R$ is called $\sigma$-{\it reduced module} by Lee-Zhou \cite{lee/zhou}, if for any $m\in M$ and $a\in R$:
\NL$(1)$ $ma=0$ implies $mR\cap Ma=0$,
\NL$(2)$ $ma=0$ if and only if $m\sigma(a)=0$.

\begin{Corollary}[{\cite[Theorem 7(1)]{baser2007}}]\label{cor pqbaer3}Let $M_R$ a
$\sigma$-compatible module. Then the following hold:
\NL$(1)$ If $M[x;\sigma]_{R[x;\sigma]}$ is a p.q.-Baer module then so is $M_R$. The converse holds if in addition $M_R$
is $\sigma$-reduced.
\NL$(2)$ If $M[[x;\sigma]]_{R[[x;\sigma]]}$ is a p.q.-Baer module then so is $M_R$.
\end{Corollary}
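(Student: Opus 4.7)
The plan is to derive this corollary as an immediate consequence of Proposition \ref{prop pqbaer}, leveraging the observation made right after Definition \ref{df1}: a $\sigma$-compatible module is precisely one that satisfies both $\mathcal{C}_1$ and $\mathcal{C}_2$. Thus the hypothesis of $\sigma$-compatibility gives us free access to either condition whenever Proposition \ref{prop pqbaer} demands it, and no new calculation is needed.

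For part $(1)$, forward direction, I would argue: since $M_R$ is $\sigma$-compatible it satisfies $\mathcal{C}_1$, so Proposition \ref{prop pqbaer}$(2)$ applied to $M[x;\sigma]_{R[x;\sigma]}$ yields that $M_R$ is p.q.-Baer. For the converse, note that the definition of $\sigma$-reduced module from Lee--Zhou already contains condition $(2)$ stating ``$ma=0$ iff $m\sigma(a)=0$'', which is exactly $\sigma$-compatibility and therefore supplies $\mathcal{C}_2$; hence Proposition \ref{prop pqbaer}$(1)$ applies to conclude that $M[x;\sigma]_{R[x;\sigma]}$ is p.q.-Baer. (In fact the $\sigma$-reducedness assumption is stronger than needed for the argument — $\sigma$-compatibility alone already yields $\mathcal{C}_2$ — but we keep the hypothesis in this form so the corollary recovers \cite[Theorem 7(1)]{baser2007} verbatim.)

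For part $(2)$, the same reasoning goes through: $\sigma$-compatibility provides $\mathcal{C}_1$, and Proposition \ref{prop pqbaer}$(2)$, whose statement explicitly covers the skew power series extension $M[[x;\sigma]]_{R[[x;\sigma]]}$, lets us transfer the p.q.-Baer property down from the extension to $M_R$.

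There is essentially no obstacle here beyond bookkeeping: the corollary is a direct specialization of Proposition \ref{prop pqbaer}, and the only point worth flagging is that $\sigma$-compatibility must be unpacked into its constituent one-sided conditions $\mathcal{C}_1$ and $\mathcal{C}_2$ in order to line up with the hypotheses of the two parts of the proposition. Once this dictionary is set up, each assertion of the corollary follows in a single line.
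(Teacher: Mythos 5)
Your proposal is correct and matches the paper's (implicit) argument: the corollary is stated without proof precisely because it is the specialization of Proposition \ref{prop pqbaer} obtained by unpacking $\sigma$-compatibility into $\mathcal{C}_1$ and $\mathcal{C}_2$. Your remark that the $\sigma$-reducedness hypothesis in part $(1)$ is redundant for the converse (since $\sigma$-compatibility alone already supplies $\mathcal{C}_2$) is accurate and consistent with the paper's stronger Corollary \ref{cor pqbaer4}.
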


\begin{Corollary}[{\cite[Corollary 2.6]{Baser2}}]\label{cor pqbaer4}Let $M_R$ be a $\sigma$-compatible module. Then
$M_R$ is p.q.-Baer if and only if $M[x;\sigma]_{R[x;\sigma]}$ is p.q.-Baer.
\end{Corollary}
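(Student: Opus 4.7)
The plan is to recognize that this corollary falls out immediately from Proposition \ref{prop pqbaer} combined with the observation made right after Definition \ref{df1}: a module $M_R$ is $\sigma$-compatible if and only if it satisfies both $\mathcal{C}_1$ and $\mathcal{C}_2$. So the $\sigma$-compatibility hypothesis simultaneously supplies whichever of $\mathcal{C}_1$ or $\mathcal{C}_2$ is needed for each implication of the corollary.

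For the ``only if'' direction, I would invoke Proposition \ref{prop pqbaer}(1): since $M_R$ is $\sigma$-compatible, it satisfies $\mathcal{C}_2$, so p.q.-Baerness of $M_R$ transfers to $M[x;\sigma]_{R[x;\sigma]}$. For the ``if'' direction, I would invoke Proposition \ref{prop pqbaer}(2): $\sigma$-compatibility gives $\mathcal{C}_1$, and hence p.q.-Baerness of $M[x;\sigma]_{R[x;\sigma]}$ descends to $M_R$.

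There is no real obstacle here; the corollary is a clean specialization of the proposition to the case where both one-sided conditions hold at once. The only bookkeeping step is to spell out the equivalence ``\,$\sigma$-compatible $\iff \mathcal{C}_1$ and $\mathcal{C}_2$'' before citing the proposition, and this is already noted in the paragraph following Definition \ref{df1}. In particular, no new calculation with the skew polynomial multiplication $(*)$ is required, since all the annihilator analysis has been carried out in the proof of Proposition \ref{prop pqbaer}.
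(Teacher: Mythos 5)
Your proposal is correct and matches exactly how the paper derives this corollary: it follows immediately from Proposition \ref{prop pqbaer}, using the remark after Definition \ref{df1} that $\sigma$-compatibility is equivalent to $\mathcal{C}_1$ together with $\mathcal{C}_2$. No further argument is needed.
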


\section{Skew polynomials over p.p.-modules}

Let $M_R$ be an $\sigma$-Armendariz module, if $me=0$ where $e^2=e\in R$ and $m\in M$, then $mfe=0$ for any $f^2=f\in R$ (by \cite[Lemma 2.10]{lee/zhou}). This result still true if we replace the condition ``$M_R$ is $\sigma$-Armendariz'' by ``$M_R$ is $\sigma$-skew Armendariz satisfying $\mathcal{C}_2$''.

\begin{Proposition}\label{prop pp}Let $M_R$ be a $\sigma$-skew Armendariz module which satisfies the condition $\mathcal{C}_2$. The following statements hold:
\NL$(1)$ $M_R$ is a p.p.-module if and only if $M[x;\sigma]_{R[x;\sigma]}$ is a p.p.-module,
\NL$(2)$ Let $\sigma\in Aut(R)$, then $M_R$ is a p.p.-module if and only if $M[x,x^{-1};\sigma]_{R[x,x^{-1};\sigma]}$ is a p.p.-module.
\end{Proposition}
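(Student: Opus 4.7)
The plan is to establish (1) in both directions and obtain (2) from (1) by a change of variables. The starting point, under $\sigma$-skew Armendariz and $\mathcal{C}_2$, is the coefficient identification: for $m(x)=\sum_{i=0}^n m_ix^i$ and $f(x)=\sum_j a_jx^j$, the equation $m(x)f(x)=0$ reduces to $m_i\sigma^i(a_j)=0$ for all $i,j$ (by Armendariz), and then to $m_ia_j=0$ (by applying $\mathcal{C}_2$ to each pair). Thus
$$r_{R[x;\sigma]}\bigl(m(x)\bigr)=\Bigl(\bigcap_{i=0}^n r_R(m_i)\Bigr)[x;\sigma],$$
so the forward implication of (1) boils down to exhibiting an idempotent $e\in R$ with $eR=\bigcap_{i=0}^n r_R(m_i)$ when each $r_R(m_i)=e_iR$.

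I plan to construct this idempotent by induction on $i$, setting $g_0=e_0$ and $g_i=g_{i-1}e_i$. The main tool is the extension of \cite[Lemma 2.10]{lee/zhou} cited just above the proposition, valid for $\sigma$-skew Armendariz modules satisfying $\mathcal{C}_2$: if $me=0$ with $e^2=e$, then $mfe=0$ for every $f^2=f\in R$. Applied to $m_ie_i=0$ with $f=g_{i-1}$, it yields $m_ig_{i-1}e_i=0$, so $g_{i-1}e_i\in e_iR$; applied to the inductive relations $m_kg_{i-1}=0$ for $k<i$ with $f=e_i$, it yields $m_ke_ig_{i-1}=0$, so $e_ig_{i-1}\in g_{i-1}R$. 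These two relations feed into each other to verify both that $g_i=g_{i-1}e_i$ is idempotent and that $g_iR=g_{i-1}R\cap e_iR$. Setting $e=g_n$ then gives $r_{R[x;\sigma]}(m(x))=eR[x;\sigma]$, so $M[x;\sigma]_{R[x;\sigma]}$ is p.p.

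For the backward implication of (1), I view $m\in M$ as a constant polynomial. Writing $r_{R[x;\sigma]}(m)=g(x)R[x;\sigma]$ with $g(x)=g_0+g_1x+\cdots+g_nx^n$ idempotent, the constant term of $g(x)^2=g(x)$ forces $g_0^2=g_0$, and $\sigma$-skew Armendariz applied to $m\cdot g(x)=0$ gives $mg_j=0$ for every $j$, so $g_0R\subseteq r_R(m)$; conversely, any $a\in r_R(m)$ satisfies $a=g(x)h(x)$ and reading the constant coefficient places $a$ in $g_0R$, whence $r_R(m)=g_0R$. For (2), the backward direction follows the same template in the Laurent ring after multiplying by a power of the unit $x$ to reduce to the polynomial situation where $\sigma$-skew Armendariz is available. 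For the forward direction of (2), given $m(x)\in M[x,x^{-1};\sigma]$ with minimal exponent $-s$, set $m'(x)=m(x)x^s\in M[x;\sigma]$, apply (1) to obtain an idempotent $e\in R$ with $r_{R[x;\sigma]}(m'(x))=eR[x;\sigma]$, upgrade this to $r_{R[x,x^{-1};\sigma]}(m'(x))=eR[x,x^{-1};\sigma]$ by pushing any Laurent annihilator into $R[x;\sigma]$ via right multiplication by a sufficiently high power of $x$, and conclude via the commutation $x^se=\sigma^s(e)x^s$ that $r_{R[x,x^{-1};\sigma]}(m(x))=\sigma^s(e)R[x,x^{-1};\sigma]$ with $\sigma^s(e)\in R$ idempotent.

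The main obstacle is the forward direction of (1), and within it the assertion that $g_i=g_{i-1}e_i$ is simultaneously idempotent and a generator of the meet $g_{i-1}R\cap e_iR$. The $e_i$ need not commute or be central, so neither property is automatic; the extended Lee-Zhou lemma is precisely the device that allows $e_i$ to be slid past $g_{i-1}$ on either side while staying inside the relevant annihilators, and the verification of $(g_{i-1}e_i)^2=g_{i-1}e_i$ together with the meet formula is the single computation on which the whole proof pivots.
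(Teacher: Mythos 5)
Your overall route is essentially the paper's: reduce $m(x)\phi(x)=0$ to $m_i\sigma^i(a_j)=0$ by $\sigma$-skew Armendariz and then to $m_ia_j=0$ by $\mathcal{C}_2$, and take $e=e_0e_1\cdots e_n$. Your inductive verification that $g_i=g_{i-1}e_i$ is idempotent with $g_iR=g_{i-1}R\cap e_iR$ is a correct expansion of what the paper delegates to \cite[Lemma 2.10]{lee/zhou} and its $\mathcal{C}_2$-variant, and your arguments for the converse of $(1)$ and for $(2)$ supply details the paper only cites or asserts; the reduction of $(2)$ to $(1)$ via the unit $x$ and the commutation $x^se=\sigma^s(e)x^s$ is sound.

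There is, however, one unjustified step at the very place you call the ``starting point''. The displayed identity
$$r_{R[x;\sigma]}\bigl(m(x)\bigr)=\Bigl(\bigcap_{i=0}^n r_R(m_i)\Bigr)[x;\sigma]$$
does not follow from the coefficient identification: Armendariz plus $\mathcal{C}_2$ gives only the inclusion $\subseteq$. For $\supseteq$ you would need that $m_ia=0$ forces $m_i\sigma^i(a)=0$ for every $a$ in the intersection, and that is the $\mathcal{C}_1$ direction, which is not assumed; concretely, a constant $a\in\bigcap_i r_R(m_i)$ need not annihilate $m(x)$, since $m(x)\cdot a=\sum_i m_i\sigma^i(a)x^i$. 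Consequently, ``exhibiting an idempotent $e$ with $eR=\bigcap_i r_R(m_i)$'' is not yet the whole job: you must still show $eR[x;\sigma]\subseteq r_{R[x;\sigma]}(m(x))$, i.e. $m_i\sigma^j(e)=0$ for all $i,j$. This is exactly what the paper's Lemma \ref{lemma idempo} supplies: under $\mathcal{C}_2$ alone one has $me=m\sigma(e)$ for every idempotent $e$, so $m_ie=0$ propagates to $m_i\sigma^j(e)=0$ --- but only because $e$ is idempotent. Once you insert this lemma (and apply it to the idempotent $\sigma^s(e)$ in the Laurent case), your proof closes up and agrees with the paper's.
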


\begin{proof}$(1)$$(\Leftarrow)$ Is clear by \cite[Theorem 2.11]{lee/zhou}.
\NL $(\Rightarrow)$ Let $m(x)=m_0+m_1x+\cdots+m_nx^n\in M[x;\sigma]$, then $r_R(m_i)=e_iR$, for
some idempotents $e_i\in R\;(0\leq i\leq n)$. Let $e=e_0e_1\cdots e_n$, then $m_ie=0$ for all $0\leq i\leq n$ (\cite[Lemma 2.10]{lee/zhou}) and by Lemma \ref{lemma idempo}, we have $m_i\sigma^j(e)=0$ for all $0\leq i\leq n$ and $j\geq 0$. Therefore $e\in r_{R[x;\sigma]}(m(x))$, so $eR[x;\sigma]\subseteq r_{R[x;\sigma]}(m(x))$. Conversely, let $\phi(x)=a_0+a_1x+\cdots+a_px^p\in r_{R[x;\sigma]}(m(x))$, then $m(x)\phi(x)=0$. Since $M_R$ is $\sigma$-skew Armendariz, we have $m_i\sigma^i(a_j)=0$ for all $i,j$ and with the condition $\mathcal{C}_2$ we have $m_ia_j=0$ for all $i,j$. So $a_j\in r_R(m_i)=e_iR$ for all $i,j$. Thus $a_j\in\cap_{i=0}^n r_R(m_i)=eR$ for each $j$. Then $\phi(x)\in e{R[x;\sigma]}$, therefore $r_{R[x;\sigma]}(m(x))=eR[x;\sigma]$. With the same method, we can prove $(2)$.
\end{proof}

\begin{Corollary}[{\cite[Theorem 11(1a,2a)]{lee/zhou}}]If $M_R$ is $\sigma$-Armendariz. Then:
\NL$(1)$ $M_R$ is a p.p.-module if and only if $M[x;\sigma]_{R[x;\sigma]}$ is a p.p.-module,
\NL$(2)$ Let $\sigma\in Aut(R)$, then $M_R$ is a p.p.-module if and only if $M[x,x^{-1};\sigma]_{R[x,x^{-1};\sigma]}$ is a p.p.-module.
\end{Corollary}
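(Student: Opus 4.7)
The plan is to observe that this Corollary is an immediate specialization of Proposition~\ref{prop pp}. By the definition recalled just before Proposition~\ref{prop pp} (following Lee--Zhou), a module $M_R$ is $\sigma$-Armendariz precisely when it is both $\sigma$-compatible and $\sigma$-skew Armendariz. The remark following Definition~\ref{df1} records that $\sigma$-compatibility is equivalent to the conjunction of $\mathcal{C}_1$ and $\mathcal{C}_2$. Consequently, a $\sigma$-Armendariz module automatically satisfies $\mathcal{C}_2$ and is $\sigma$-skew Armendariz, which are exactly the hypotheses of Proposition~\ref{prop pp}.

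Given this, I would simply write: since $M_R$ is $\sigma$-Armendariz, $M_R$ is $\sigma$-skew Armendariz and satisfies $\mathcal{C}_2$; therefore by Proposition~\ref{prop pp}(1), $M_R$ is a p.p.-module if and only if $M[x;\sigma]_{R[x;\sigma]}$ is a p.p.-module, which proves (1). For (2), assuming further that $\sigma\in Aut(R)$, Proposition~\ref{prop pp}(2) gives $M_R$ is a p.p.-module if and only if $M[x,x^{-1};\sigma]_{R[x,x^{-1};\sigma]}$ is a p.p.-module.

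There is no real obstacle here; the only thing one must check is the bookkeeping on definitions, namely that $\sigma$-Armendariz is strictly stronger than the pair of hypotheses in Proposition~\ref{prop pp}. I would therefore keep the proof to essentially one line, emphasizing that this Corollary recovers \cite[Theorem 11(1a,2a)]{lee/zhou} as a special case of the more general Proposition~\ref{prop pp}, since the condition $\mathcal{C}_2$ alone (together with $\sigma$-skew Armendariz) suffices, without needing the full $\sigma$-compatibility built into the $\sigma$-Armendariz hypothesis.
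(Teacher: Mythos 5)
Your proposal is correct and matches the paper's (implicit) reasoning exactly: the paper states this Corollary with no written proof precisely because, as you note, $\sigma$-Armendariz means $\sigma$-compatible plus $\sigma$-skew Armendariz, and $\sigma$-compatibility gives $\mathcal{C}_2$, so Proposition \ref{prop pp} applies directly. Nothing is missing.
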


\smallskip

If $M_R$ is a semicommutative module such that, $m\sigma(a)a=0$ implies $m\sigma(a)=0$ for any $m\in M$ and $a\in R$. Then $M_R$ is $\sigma$-semicommutative and hence it satisfies the condition $\mathcal{C}_1$. To see this, suppose that $ma=0$ then $mRa=0$, in particular $mr\sigma(a)a=0$ for all $r\in R$. By the above condition, $mr\sigma(a)=0$ for all $r\in R$. Thus $M_R$ is $\sigma$-semicommutative.

\begin{Lemma}\label{lemma1}If $M_R$ is a semicommutative module such that $m\sigma(a)a=0$ implies $m\sigma(a)=0$ for any $m\in M$ and $a\in R$. Then $M_R$ is $\sigma$-skew Armendariz.
\end{Lemma}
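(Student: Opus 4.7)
The plan is to establish $\sigma$-skew Armendariz by a double induction. First, I would exploit the remark preceding the lemma: under the stated hypotheses $M_R$ is $\sigma$-semicommutative and satisfies $\mathcal{C}_1$, so by iteration, $ma=0$ implies $mR\sigma^{\ell}(a)=0$ for every $\ell\ge 0$. This will be the only algebraic closure property I need.

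Write $m(x)=\sum_{i=0}^n m_i x^i$ and $f(x)=\sum_{j=0}^s a_j x^j$ with $m(x)f(x)=0$, so the coefficient equations read $c_k:=\sum_{i+j=k} m_i \sigma^i(a_j)=0$ for $0\le k\le n+s$. I would prove by strong induction on $k$ that $m_i \sigma^i(a_{k-i})=0$ for every $0\le i\le k$; the base case $k=0$ is immediate.

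For the inductive step I would run a secondary downward induction on $t$ from $k$ to $1$, peeling off one summand $m_t\sigma^t(a_{k-t})$ at a time. After the summands with index $>t$ have been eliminated, $c_k=0$ reduces to $\sum_{i=0}^t m_i\sigma^i(a_{k-i})=0$, and I multiply this on the right by $\sigma^{t-1}(a_{k-t})$. For each $i<t$, the outer induction gives $m_i\sigma^i(a_{k-t})=0$ (since $i+(k-t)<k$), and hence the iterated consequence above yields $m_i R\sigma^{t-1}(a_{k-t})=0$; in particular $m_i\sigma^i(a_{k-i})\sigma^{t-1}(a_{k-t})=0$. The only surviving contribution is $m_t\sigma^t(a_{k-t})\sigma^{t-1}(a_{k-t})$, and setting $b=\sigma^{t-1}(a_{k-t})$ rewrites it as $m_t\sigma(b)b=0$, so the lemma's standing hypothesis forces $m_t\sigma(b)=m_t\sigma^t(a_{k-t})=0$. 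When the recursion reaches $t=0$ the residual identity is just $m_0 a_k=0$, which closes the outer induction.

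The main obstacle is choosing the multiplier at each step: $\sigma^{t-1}(a_{k-t})$ is designed precisely so that the surviving product $\sigma^t(a_{k-t})\sigma^{t-1}(a_{k-t})$ takes the form $\sigma(b)b$ needed to invoke $m\sigma(a)a=0\Rightarrow m\sigma(a)=0$. The delicate point is verifying that this same multiplier simultaneously annihilates every lower-index summand, and that is exactly what the combination of semicommutativity with the iterated $\mathcal{C}_1$ supplies.
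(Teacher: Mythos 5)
Your proposal is correct and follows essentially the same route as the paper: both arguments reduce the coefficient identity for level $k$ by right-multiplying by $\sigma^{t-1}(a_{k-t})$ so that the surviving top summand has the shape $m\sigma(b)b$, invoke the hypothesis to kill it, and use semicommutativity together with the iterated $\mathcal{C}_1$/$\sigma$-semicommutativity to annihilate the lower-index summands. Your reorganization as a strong induction on $k$ with an explicit downward secondary induction on $t$ is just a cleaner bookkeeping of the paper's ``continue this procedure'' step.
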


\begin{proof}Let $m(x)=m_0+m_1x+\cdots+m_nx^n\in M[x;\sigma]$ and $f(x)=a_0+a_1x+\cdots+a_px^p\in R[x;\sigma]$.
From $m(x)f(x)=0$, we have $\sum_{i+j=k}m_i\sigma^i(a_j)=0$, for $0\leq k\leq n+p$. So, $m_0a_0=0$.
Assume that $s\geq 0$ and $m_i\sigma^i(a_j)=0$ for all $i,j$ with $i+j\leq s$. Note that for $s+1$, we have
$$m_0a_{s+1}+m_1\sigma(a_s)+\cdots+m_s\sigma^s(a_1)+a_{s+1}\sigma^{s+1}(a_0)=0\eqno(1)$$ Multiplying $(1)$ by
$\sigma^s(a_0)$ from the right hand, we obtain
$$m_0a_{s+1}\sigma^s(a_0)+m_1\sigma(a_s)\sigma^s(a_0)+\cdots+m_s\sigma^s(a_1)\sigma^s(a_0)+a_{s+1}\sigma^{s+1}(a_0)
\sigma^s(a_0)=0,$$ we have $m_0a_0=0$, then $m_0\sigma^s(a_0)=0$ because $M_R$ is $\sigma$-semicommutative, and so $m_0a_{s+1}\sigma^s(a_0)=0$. Also, $m_1\sigma(a_0)=0$ then $m_1\sigma^s(a_0)=0$, thus $m_1\sigma(a_s)\sigma^s(a_0)=0$. Continuing this process until the step $s$, $m_s\sigma^s(a_0)=0$ then $m_s\sigma^s(a_1)$ $\sigma^s(a_0)=0$. Therefore
$m_{s+1}\sigma^{s+1}(a_0)\sigma^s(a_0)=0$. But $$m_{s+1}\sigma^{s+1}(a_0)\sigma^s(a_0)=m_{s+1}\sigma[\sigma^{s}(a_0)]\sigma^s(a_0)=0.$$ So $m_{s+1}\sigma^{s+1}(a_0)=0$ . Therefore, equation $(1)$, becomes
$$m_0a_{s+1}+m_1\sigma(a_s)+\cdots+m_s\sigma^s(a_1)=0\eqno(2)$$ Multiplying $(2)$, by $\sigma^{s-1}(a_1)$ from the
right hand to obtain $m_s\sigma^s(a_1)=0$. Continuing this procedure yields
$$m_0a_{s+1}=m_1\sigma(a_s)=\cdots=m_s\sigma^s(a_1)=a_{s+1}\sigma^{s+1}(a_0)=0.$$ A simple induction shows that
$m_i\sigma^i(a_j)=0$, for all $i,j$.
\end{proof}

\begin{Proposition}\label{prop1}Let $M_R$ be a module such that $m\sigma(a)a=0$ implies $m\sigma(a)=0$ for any $m\in M$ and $a\in R$. If $M_R$ is semicommutative then $M[x;\sigma]_{R[x;\sigma]}$ and  $M[[x;\sigma]]_{R[[x;\sigma]]}$ are semicommutative.
\end{Proposition}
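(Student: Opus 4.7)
The plan is to reduce semicommutativity of $M[x;\sigma]_{R[x;\sigma]}$ to coefficient-level identities and then verify those identities using the ingredients we already have. First I would observe, from the paragraph preceding Lemma \ref{lemma1}, that the standing hypothesis forces $M_R$ to be $\sigma$-semicommutative, and in particular to satisfy $\mathcal{C}_1$; moreover, by Lemma \ref{lemma1}, $M_R$ is $\sigma$-skew Armendariz. Apart from its own semicommutativity, these are the only properties of $M_R$ I will use.

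Given $m(x)=\sum_i m_ix^i\in M[x;\sigma]$ and $f(x)=\sum_j a_jx^j\in R[x;\sigma]$ with $m(x)f(x)=0$, the $\sigma$-skew Armendariz property yields $m_i\sigma^i(a_j)=0$ for all $i,j$. Iterating $\mathcal{C}_1$, with $a$ successively replaced by $\sigma^i(a_j),\sigma^{i+1}(a_j),\ldots$, upgrades this to $m_i\sigma^{i+k}(a_j)=0$ for every $k\geq 0$. Semicommutativity of $M_R$ then yields $m_iR\,\sigma^{i+k}(a_j)=0$, so $m_i\sigma^i(b_k)\sigma^{i+k}(a_j)=0$ for every $b_k\in R$. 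For an arbitrary $g(x)=\sum_k b_kx^k\in R[x;\sigma]$, expanding
$$m(x)g(x)f(x)=\sum_{t\geq 0}\parth{\sum_{i+k+j=t} m_i\sigma^i(b_k)\sigma^{i+k}(a_j)}x^t$$
shows every coefficient vanishes. Hence $m(x)R[x;\sigma]f(x)=0$, proving $M[x;\sigma]_{R[x;\sigma]}$ is semicommutative.

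For the power series extension $M[[x;\sigma]]_{R[[x;\sigma]]}$ the same script applies once one verifies that the conclusion of Lemma \ref{lemma1} carries over verbatim: the induction there only uses that the coefficient of $x^{s+1}$ in $m(x)f(x)$ is a finite sum, which remains true for power series, so $\sigma$-skew Armendariz holds in that setting as well. The downstream steps---the $\mathcal{C}_1$-iteration, invocation of semicommutativity of $M_R$, and the coefficient-wise expansion---transcribe without change.

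The only step that genuinely requires the full strength of the hypothesis is the promotion from $\sigma^i(a_j)$ to $\sigma^{i+k}(a_j)$: it is precisely here that $\mathcal{C}_1$ (the $\sigma$-semicommutativity consequence of the assumption $m\sigma(a)a=0\Rightarrow m\sigma(a)=0$) is essential, since without it one would be stuck with the wrong power of $\sigma$ on $a_j$ to match what actually appears in the expansion of $m(x)g(x)f(x)$. I do not foresee any further obstacle.
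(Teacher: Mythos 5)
Your proposal is correct and follows essentially the same route as the paper: invoke Lemma \ref{lemma1} to get $m_i\sigma^i(a_j)=0$, upgrade to $m_i\sigma^{i+k}(a_j)=0$ via $\mathcal{C}_1$, and then use semicommutativity of $M_R$ to kill each term $m_i\sigma^i(b_k)\sigma^{i+k}(a_j)$ in the expansion of $m(x)g(x)f(x)$. Your explicit remark that the induction in Lemma \ref{lemma1} transfers to power series (since each coefficient is still a finite sum) is a small but worthwhile addition, as the paper dispatches that case with only ``the same for $M[[x;\sigma]]$.''
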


\begin{proof}Let $m(x)=\sum_{i=0}^n m_ix^i\in M[x;\sigma]$, $f(x)=\sum_{j=0}^q a_jx^j\in R[x;\sigma]$ and
$\phi(x)=\sum_{k=0}^p b_kx^k\in R[x;\sigma]$. Suppose that $m(x)f(x)=0$. The coefficients of $m(x)\phi(x)f(x)$ are of
the form
$$\sum_{u+v=w}m_u\sigma^u\parth{\sum_{i+j=v}b_i\sigma^i(a_j)}=\sum_{u+v=w}\parth{\sum_{i+j=v}m_u\sigma^u(b_i)\sigma^{u+i}(a_j)}.$$
By Lemma \ref{lemma1}, $m_u\sigma^u(a_j)=0$, for all $u,j$ and by $\mathcal{C}_1$,
$m_u\sigma^{u+i}(a_j)=0$, for all $i,j,u$. Since $M_R$ is semicommutative then $m_u\sigma^u(b_i)
\sigma^{u+i}(a_j)=0$, therefore $$\sum_{u+v=w}m_u\sigma^u\parth{\sum_{i+j=v}b_i\sigma^i(a_j)}=0.$$ So
$m(x)\phi(x)f(x)=0$, then $M[x;\sigma]_{R[x;\sigma]}$ is semicommutative. The same for $M[[x;\sigma]]_{R[[x;\sigma]]}$.
\end{proof}

According to Baser and Harmanci \cite{baser2007}, a module $M_R$ is {\it reduced} if for any $m\in M$ and $a\in R$, $ma^2=0$ implies $mR\cap Ma=0$. By \cite[Lemma 2.11]{Baser1}, if $M_R$ is semicommutative p.p. or semicommutative p.q.-Baer then it's reduced.

\begin{Corollary}\label{prop2}Let $M_R$ be a semicommutative module satisfying the condition $\mathcal{C}_1$, if $M_R$ is p.q.-Baer or p.p. then $M[x;\sigma]_{R[x;\sigma]}$ and  $M[[x;\sigma]]_{R[[x;\sigma]]}$ are semicommutative.
\end{Corollary}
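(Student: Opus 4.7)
The plan is to reduce the corollary to Proposition \ref{prop1}, which already proves semicommutativity of $M[x;\sigma]_{R[x;\sigma]}$ and $M[[x;\sigma]]_{R[[x;\sigma]]}$ from exactly two hypotheses: $M_R$ is semicommutative, and $m\sigma(a)a=0$ implies $m\sigma(a)=0$ for all $m\in M$, $a\in R$. Semicommutativity is assumed here, so the entire task is to verify the second, cancellation-style implication using the p.q.-Baer (respectively p.p.) hypothesis together with $\mathcal{C}_1$.

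The first step is to invoke the Baser--Harmanci result \cite[Lemma 2.11]{Baser1} quoted just above the corollary: a semicommutative p.p.- or semicommutative p.q.-Baer module is reduced, i.e.\ for every $m\in M$ and $b\in R$, $mb^2=0$ forces $mR\cap Mb=0$. Since $mb=m\cdot b$ lies in both $mR$ and $Mb$, this reduced condition in particular yields the one-sided cancellation $mb^2=0\Rightarrow mb=0$, which is what I actually need.

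The verification is then short. Suppose $m\sigma(a)a=0$. Applying the condition $\mathcal{C}_1$ to the element $m\sigma(a)\in M$ and the ring element $a$, we obtain $m\sigma(a)\cdot\sigma(a)=0$, i.e.\ $m\sigma(a)^{2}=0$. Taking $b=\sigma(a)$ in the reducedness consequence immediately forces $m\sigma(a)=0$. Both hypotheses of Proposition \ref{prop1} are thus in place, and the semicommutativity of $M[x;\sigma]_{R[x;\sigma]}$ and $M[[x;\sigma]]_{R[[x;\sigma]]}$ is inherited.

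The only conceptual point is noticing that $\mathcal{C}_1$ converts the mixed product $m\sigma(a)a$ into a genuine square $m\sigma(a)^{2}$ to which reducedness applies; without $\mathcal{C}_1$ one would have no way to relate $\sigma(a)$ to $a$ on the right. Once that observation is in hand, the rest is bookkeeping, and I foresee no real obstacle.
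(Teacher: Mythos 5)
Your proposal is correct and follows exactly the paper's own argument: use the quoted Baser--Agayev fact that a semicommutative p.p.- or p.q.-Baer module is reduced, apply $\mathcal{C}_1$ to the element $m\sigma(a)$ to turn $m\sigma(a)a=0$ into $m\sigma(a)^2=0$, conclude $m\sigma(a)=0$ from reducedness, and invoke Proposition \ref{prop1}. Your explicit remark that $m\sigma(a)\in mR\cap M\sigma(a)$ just spells out a step the paper leaves implicit.
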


\begin{proof}Let $a\in R$ and $m\in M$ such that $m\sigma(a)a=0$, then $m(\sigma(a))^2=0$ (by $\mathcal{C}_1$), since $M_R$ is reduced we have $m\sigma(a)=0$. By Proposition \ref{prop1}, $M[x;\sigma]_{R[x;\sigma]}$ and $M[[x;\sigma]]_{R[[x;\sigma]]}$ are semicommutative.
\end{proof}

\begin{Theorem}\label{theo2}If $M_R$ is semicommutative and $\sigma$-compatible. Then the following are equivalent:
\NL$(1)$ $M_R$ is p.p.
\NL$(2)$ $M_R$ is p.q.-Baer,
\NL$(3)$ $M[x;\sigma]_{R[x;\sigma]}$ is p.p.,
\NL$(4)$ $M[x;\sigma]_{R[x;\sigma]}$ is p.q.-Baer,
\end{Theorem}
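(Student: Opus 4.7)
The four items group into two natural pairs: (1)--(2) concern $M_R$, while (3)--(4) concern $M[x;\sigma]_{R[x;\sigma]}$, and the left--right split within each pair compares p.p.\ with p.q.-Baer. My plan is to collapse the p.p.\ vs.\ p.q.-Baer distinction at the module level using semicommutativity, then to bridge $M_R$ with its skew polynomial extension using Propositions \ref{prop pqbaer} and \ref{prop pp}.

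For (1)\,$\Leftrightarrow$\,(2), I would observe that semicommutativity forces $r_R(m)=r_R(mR)$ for every $m\in M$: the inclusion $r_R(mR)\subseteq r_R(m)$ is trivial, and $ma=0$ implies $mRa=0$ by semicommutativity. Hence the principal annihilator of $m$ and the annihilator of the cyclic submodule $mR$ agree, so one is generated by an idempotent iff the other is. The equivalence (2)\,$\Leftrightarrow$\,(4) then drops out of Proposition \ref{prop pqbaer} directly, since $\sigma$-compatibility supplies both $\mathcal{C}_1$ and $\mathcal{C}_2$.

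The main work lies in (1)\,$\Rightarrow$\,(3), which I would route through Proposition \ref{prop pp}(1). Its standing hypothesis $\mathcal{C}_2$ is free, so the point is to check that $M_R$ is $\sigma$-skew Armendariz, and for this I intend to use Lemma \ref{lemma1}. The sufficient condition $m\sigma(a)a=0\Rightarrow m\sigma(a)=0$ in that lemma is the delicate step: under (1), the module $M_R$ is semicommutative and p.p., hence reduced in the sense of Baser--Harmanci by \cite[Lemma 2.11]{Baser1}. Then starting from $m\sigma(a)a=0$, applying $\mathcal{C}_1$ to $m\sigma(a)$ gives $m\sigma(a)\sigma(a)=m\sigma(a^2)=0$; $\mathcal{C}_2$ gives $ma^2=0$; reducedness (in the form $ma^2=0\Rightarrow ma=0$, since $ma\in mR\cap Ma$) gives $ma=0$; and $\mathcal{C}_1$ one final time gives $m\sigma(a)=0$. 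Lemma \ref{lemma1} then produces the $\sigma$-skew Armendariz property, and Proposition \ref{prop pp}(1) delivers (3).

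For (3)\,$\Rightarrow$\,(1), I would argue directly by coefficient comparison. Given $m\in M$, write the idempotent generator of $r_{R[x;\sigma]}(m)$ as $g=g_0+g_1x+\cdots+g_nx^n$. Expanding $g^2=g$ and reading off the constant term yields $g_0^2=g_0$; expanding $mg=\sum_i mg_i\,x^i=0$ yields $mg_i=0$ for every $i$, so in particular $g_0R\subseteq r_R(m)$. Conversely, any $a\in r_R(m)$ lies in $r_{R[x;\sigma]}(m)=gR[x;\sigma]$, so $a=gh$ for some $h=h_0+h_1x+\cdots\in R[x;\sigma]$, and the constant coefficient of $gh$ forces $a=g_0h_0\in g_0R$. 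Hence $r_R(m)=g_0R$ and (1) holds. The principal obstacle is the reducedness argument in (1)\,$\Rightarrow$\,(3); the remaining implications are essentially bookkeeping once Propositions \ref{prop pqbaer} and \ref{prop pp} are available.
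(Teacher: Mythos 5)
Your proposal is correct, but it routes the polynomial-level equivalences differently from the paper. For $(1)\Leftrightarrow(2)$ the paper simply cites \cite[Proposition 2.7]{Baser1}, whereas you reprove it via the identity $r_R(m)=r_R(mR)$ for semicommutative modules --- same content, just made explicit. The real divergence is in how the two p.p.\ statements are linked: the paper proves $(3)\Leftrightarrow(4)$ by first transferring semicommutativity to $M[x;\sigma]_{R[x;\sigma]}$ (Corollary \ref{prop2}) and then applying the p.p.\ vs.\ p.q.-Baer equivalence of \cite[Proposition 2.7]{Baser1} \emph{at the polynomial level}, while you instead prove $(1)\Leftrightarrow(3)$ directly: you use reducedness (from \cite[Lemma 2.11]{Baser1}) together with $\mathcal{C}_1$, $\mathcal{C}_2$ to verify the hypothesis $m\sigma(a)a=0\Rightarrow m\sigma(a)=0$ of Lemma \ref{lemma1}, conclude that $M_R$ is $\sigma$-skew Armendariz, and invoke Proposition \ref{prop pp}(1); for $(3)\Rightarrow(1)$ you give a self-contained constant-coefficient argument in place of the citation to Lee--Zhou. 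Both routes ultimately rest on the same pivot (semicommutative $+$ p.p.\ implies reduced), but yours never needs semicommutativity of the extension $M[x;\sigma]_{R[x;\sigma]}$ and is arguably cleaner: the paper's $(3)\Rightarrow(4)$ step opens with ``Since $M_R$ is a p.p.-module,'' which is not yet available from $(3)$ and has to be supplied by the easy direction of Proposition \ref{prop pp}(1) --- a gap your version avoids entirely. Your chain $(1)\Leftrightarrow(2)\Leftrightarrow(4)$ together with $(1)\Leftrightarrow(3)$ does close the full cycle of equivalences.
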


\begin{proof}$(1)\Leftrightarrow (2)$ By \cite[Proposition 2.7]{Baser1}. $(2)\Leftrightarrow (4)$ By Proposition \ref{prop pqbaer}.
\NL $(3)\Rightarrow (4)$ Since $M_R$ is a p.p.-module, then Corollary \ref{prop2} implies that $M[x;\sigma]_{R[x;\sigma]}$ is semicommutative. Therefore $M[x;\sigma]_{R[x;\sigma]}$ is p.q.-Baer by \cite[Proposition 2.7]{Baser1}. $(4)\Rightarrow (3)$ By Proposition \ref{prop pqbaer}, $M_R$ is p.q.-Baer, since $M_R$ is semicommutative then $M[x;\sigma]_{R[x;\sigma]}$ is semicommutative, and so $M[x;\sigma]_{R[x;\sigma]}$ is a p.p.-module.
\end{proof}

\begin{Corollary}Let $M_R$ be a semicommutative module. Then the following are equivalent:
\NL$(1)$ $M_R$ is p.p.
\NL$(2)$ $M_R$ is p.q.-Baer,
\NL$(3)$ $M[x]_{R[x]}$ is p.p.,
\NL$(4)$ $M[x]_{R[x]}$ is p.q.-Baer,
\end{Corollary}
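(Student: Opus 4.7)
The plan is to obtain this corollary as an immediate specialization of Theorem \ref{theo2} by taking $\sigma=\mathrm{id}_R$. The identity map is a ring endomorphism satisfying $\sigma(1)=1$, so it fits the standing hypothesis on $\sigma$ used throughout the paper.

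First I would observe that when $\sigma=\mathrm{id}_R$, the module $M_R$ is automatically $\sigma$-compatible: the condition $ma=0\Leftrightarrow m\sigma(a)=0$ reduces to the tautology $ma=0\Leftrightarrow ma=0$. Second, I would note that under this choice the scalar multiplication formula $(*)$ in the introduction reduces to the ordinary polynomial multiplication, so the skew polynomial extensions coincide with the ordinary ones: $M[x;\sigma]=M[x]$ as a module over $R[x;\sigma]=R[x]$.

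With both of these identifications in place, the hypotheses of Theorem \ref{theo2} are satisfied (semicommutativity of $M_R$ is given, $\sigma$-compatibility is free), and the four equivalences (1)$\Leftrightarrow$(2)$\Leftrightarrow$(3)$\Leftrightarrow$(4) of the corollary are precisely the four equivalences of that theorem in the special case $\sigma=\mathrm{id}_R$. Hence the corollary follows immediately.

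There is essentially no obstacle here: the only thing one has to verify is that the identity endomorphism does qualify in the framework of the paper, and that the condition $\mathcal{C}_1,\mathcal{C}_2$ (i.e.\ $\sigma$-compatibility) becomes vacuous in this case. No further computation is needed, since every step of the proof of Theorem \ref{theo2}—the appeal to \cite[Proposition 2.7]{Baser1}, to Proposition \ref{prop pqbaer}, and to Corollary \ref{prop2}—goes through verbatim once $\sigma$ is replaced by the identity.
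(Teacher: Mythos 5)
Your proof is correct and is exactly the argument the paper intends: the corollary is stated without proof as an immediate specialization of Theorem \ref{theo2} to $\sigma=\mathrm{id}_R$, where $\sigma$-compatibility is automatic and $M[x;\sigma]$ reduces to the ordinary polynomial extension $M[x]$. Your write-up just makes these two routine verifications explicit.
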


\begin{Corollary}[{\cite[Theorem 2.8]{Baser2008}}]If $M_R$ is a reduced module. Then the following are equivalent:
\NL$(1)$ $M_R$ is p.p.
\NL$(2)$ $M_R$ is p.q.-Baer,
\NL$(3)$ $M[x]_{R[x]}$ is p.p.,
\NL$(4)$ $M[x]_{R[x]}$ is p.q.-Baer,
\end{Corollary}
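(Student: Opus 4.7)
The plan is to reduce this immediately to the preceding Corollary (the semicommutative case), by showing that every reduced module is semicommutative. Once that small observation is in place, the four conditions in the statement become a direct instance of the preceding Corollary applied with $\sigma$ trivial, so no new ring-theoretic or combinatorial work is needed.

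First I would prove the implication \emph{reduced $\Rightarrow$ semicommutative}. Suppose $ma=0$ for some $m\in M$ and $a\in R$. Then $ma^{2}=(ma)a=0$, so by the definition of reduced (in the sense of Baser and Harmanci used in the paper) we have $mR\cap Ma=0$. For any $r\in R$, the element $mra$ belongs to $mR$ (as $mra=m(ra)$) and also to $Ma$ (as $mra=(mr)a$); hence $mra\in mR\cap Ma=0$. This gives $mRa=0$, which is precisely semicommutativity.

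Next, with $M_R$ semicommutative established, I would simply invoke the Corollary immediately preceding the statement, which asserts the equivalence of being p.p., being p.q.-Baer, $M[x]_{R[x]}$ being p.p., and $M[x]_{R[x]}$ being p.q.-Baer for any semicommutative module. This closes all four equivalences at once. Alternatively, one may apply Theorem \ref{theo2} with $\sigma=\mathrm{id}_R$, since $\mathrm{id}_R$-compatibility is automatic; both routes give the same conclusion.

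I do not expect any real obstacle here: the only non-cosmetic step is the short argument that reduced implies semicommutative, and that follows from the defining condition by the one-line manipulation above. The remainder is a citation of an already-proved result from this paper, so the proof amounts essentially to pointing out the right inclusion between the two module classes.
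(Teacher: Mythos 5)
Your proposal is correct and follows the same route as the paper: reduce to the preceding semicommutative corollary via the implication ``reduced $\Rightarrow$ semicommutative.'' The only difference is that the paper cites \cite[Lemma 1.2]{lee/zhou} for that implication while you prove it directly (correctly) from the definition $ma^{2}=0\Rightarrow mR\cap Ma=0$.
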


\begin{proof}Every reduced module is semicommutative by \cite[Lemma 1.2]{lee/zhou}.
\end{proof}


\end{document}